\begin{document}
\parskip=6pt

\theoremstyle{plain}

\newtheorem {thm}{Theorem}[section]
\newtheorem {lem}[thm]{Lemma}
\newtheorem {cor}[thm]{Corollary}
\newtheorem {defn}[thm]{Definition}
\newtheorem {prop}[thm]{Proposition}
\numberwithin{equation}{section}

\newcommand{\cE}{{\cal E}}
\newcommand{\cF}{{\cal F}}
\newcommand{\cA}{{\cal A}}
\newcommand{\cC}{{\cal C}}
\newcommand{\cH}{{\cal H}}
\newcommand{\cU}{{\cal U}}
\newcommand{\cK}{{\cal K}}
\newcommand{\cM}{{\cal M}}
\newcommand{\cO}{{\cal O}}
\newcommand{\cN}{{\cal N}}
\newcommand{\cP}{{\cal P}}
\newcommand{\cV}{{\cal V}}
\newcommand{\cW}{{\cal W}}
\newcommand{\bC}{\mathbb C}
\newcommand{\bP}{\mathbb P}
\newcommand{\bN}{\mathbb N}
\newcommand{\bA}{\mathbb A}
\newcommand{\bR}{\mathbb R}
\newcommand{\fg}{\mathfrak g}
\newcommand{\fy}{\mathfrak y}
\newcommand{\fh}{\mathfrak h}
\newcommand{\fV}{\mathfrak V}
\newcommand{\var}{\varepsilon}
\renewcommand\qed{ }
\newcommand{\sgrad}{\text{sgrad} \,}
\newcommand{\sgru}{\text{sgrad}}
\newcommand{\grad}{\text{grad}\,}
\newcommand{\gru}{\text{grad}}
\newcommand{\id}{\text{id}}
\newcommand{\ad}{\text{ad}}
\newcommand{\const}{\text{const}\,}
\newcommand{\Ker}{\text{Ker}\,}
\newcommand{\opartial}{\overline\partial}
\newcommand{\Ree}{\text{Re}\,}
\newcommand{\Imm}{\text{Im}\,}
\newcommand{\an}{\text{an}}
\newcommand{\tr}{\text{tr}}
\newcommand{\wf}{\text{WF}_{\text A}}

\begin{titlepage}
\title{\bf On complex Legendre duality\thanks{This research was done while I enjoyed the hospitality of the  Center for Advanced Study of the Norwegian Academy of Sciences. In addition, it was partially supported by NSF grant DMS--1464150.\newline 2010 Mathematics subject classification 32Q15, 32W20, 53C55}}
\author{L\'aszl\'o Lempert\\ Department of  Mathematics\\
Purdue University\\West Lafayette, IN
47907-2067, USA}
\end{titlepage}
\date{}
\maketitle
\abstract

Complex Legendre duality is a generalization of Legendre transformation from Euclidean spaces to K\"ahler manifolds, that  Berndtsson and collaborators have recently constructed. It is a local isometry of the space of K\"ahler potentials. We show that the fixed point of such a transformation must correspond to a real analytic K\"ahler metric. 

\section{Introduction}

If $X$ is an $n$ dimensional compact complex manifold and $\omega_0$ is a smooth K\"ahler form on it, the space of its (relative) K\"ahler potentials is
$$
\cH=\{u\in C^\infty(X)\colon \omega_u=\omega_0+i\partial\bar\partial u>0\}.
$$
As an open subset of the Fr\'echet space $C^\infty(X)$ of smooth functions $X\to\bR$, it inherits a Fr\'echet manifold structure, and the tangent bundle of this manifold has a canonical trivialization $T\cH\approx \cH\times C^\infty(X)$.
The Mabuchi length $|\xi|_u$ of a tangent vector $\xi\in T_u\cH\approx C^\infty (X)$ is given by
$$
|\xi|_u^2=\int_X \xi^2 \omega_u^n \bigg/\int_X \omega_u^n.
$$
This turns $\cH$ into a smooth Riemannian manifold whose curvature is covariantly constant, see \cite{M}. 
In finite dimensional Riemannian manifolds covariantly constant curvature implies that about every point of the manifold there are local symmetries. 
Such symmetries exist in $\cH$ as well.
Berndtsson, Cordero--Erauskin, Klartag, and Rubinstein in \cite{BCKR} proposed a generalization of the Legendre transformation from Euclidean spaces to K\"ahler manifolds, and this ``complex Legendre duality"  gives rise to local symmetries:

\begin{thm} Suppose $u\in\cH$ and $\omega_u$ is real analytic. Then $u$ has a neighborhood $\cU\subset\cH$ and there is a $C^\infty$ diffeomorphism $F\colon \cU \to\cU$ that is an involution, an isometry of the Mabuchi metric, fixes $u$, and its differential $F_*$  acts on $T_u\cU$ as multiplication by $-1$.
\end{thm}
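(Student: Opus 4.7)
My plan is to realize $F$ as the complex Legendre involution of BCKR, built from the polarization of $u$ afforded by real analyticity, and then check the four properties one by one. Real analyticity of $\omega_u$ lets $u$ be polarized: on a neighborhood $\Omega$ of the diagonal $\Delta\subset X\times\overline X$ there is a real analytic function $\hat u(z,\bar w)$, holomorphic in $z$ and antiholomorphic in $w$, with $\hat u(z,\bar z)=u(z)$. This $\hat u$ plays the role of the bilinear pairing $\langle z,\bar w\rangle$ of the Euclidean Legendre transform, and the associated Calabi diastasis
\[
D(z,w):=\hat u(z,\bar z)+\hat u(w,\bar w)-\hat u(z,\bar w)-\hat u(w,\bar z)
\]
is non-negative near $\Delta$, vanishes to second order exactly on $\Delta$, and encodes the squared geodesic distance of $\omega_u$ there.

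For $v\in\cH$ close enough to $u$, I define $F(v)(y)$ pointwise by
\[
F(v)(y)\,:=\,\sup_z\bigl[\,2\Ree\hat u(z,\bar y)-v(z)\,\bigr],
\]
the supremum taken over $z$ in a small neighborhood of $y$. At $v=u$ the function $z\mapsto 2\Ree\hat u(z,\bar y)-u(z)$ attains a non-degenerate maximum at $z=y$ with value $u(y)$ and Hessian $-\omega_u(y)$; so by the implicit function theorem, for $v$ close to $u$ the critical point $z=Z^v(y)$ is unique near $y$ and depends smoothly on $(y,v)$. Writing $v=u+\phi$, a direct calculation using the definition of $D$ yields the equivalent formula
\[
F(v)(y)\,=\,u(y)-\phi(Z^v(y))-D(Z^v(y),y),
\]
the complex analog of the Euclidean identity $(u+\phi)^{\ast}(y)=u(y)-\phi(X(y))-\tfrac12|X(y)-y|^2$. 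Shrinking $\cU$ ensures $\omega_{F(v)}>0$ and that $Z^v\colon X\to X$ is a diffeomorphism, so $F\colon\cU\to\cU$ is a well-defined $C^\infty$ map.

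At $v=u$, $Z^u=\id_X$ and the supremum recovers $u$, giving $F(u)=u$. Linearizing at $\phi=\var\xi$: $Z^v(y)=y+O(\var)$ and $D(Z^v(y),y)=O(\var^2)$ (since $D$ vanishes to second order along $\Delta$), whence $F(v)(y)-u(y)=-\var\xi(y)+O(\var^2)$, so $F_\ast|_u=-\id$. The involution $F\circ F=\id$ is the fundamental property of Legendre duality: substituting the definition of $F(v)$ into $F(F(v))(w)=\sup_y[2\Ree\hat u(y,\bar w)-F(v)(y)]$ and interchanging the critical-point operations (justified by non-degeneracy) should recover $v(w)$; concretely, this amounts to checking that $Z^{F(v)}$ is the inverse diffeomorphism of $Z^v$.

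The Mabuchi isometry is the step I expect to be the main obstacle, and I would reduce it at any $v\in\cU$ and $\xi\in T_v\cH$ to two identities: the tangent-level formula
\[
F_\ast\xi\,=\,-\,\xi\circ(Z^v)^{-1},
\]
obtained by differentiating $v\mapsto F(v)$ along a curve through $v$ with velocity $\xi$, and the Monge--Amp\`ere Jacobian identity
\[
(Z^v)^{\ast}\,\omega_{F(v)}^{\,n}\,=\,\omega_v^{\,n}.
\]
Together with invariance of $\int_X\omega_v^{\,n}$ within a K\"ahler class, these give
\[
|F_\ast\xi|_{F(v)}^{\,2}=\frac{\int_X(\xi\circ(Z^v)^{-1})^2\omega_{F(v)}^{\,n}}{\int_X\omega_{F(v)}^{\,n}}=\frac{\int_X\xi^2\,(Z^v)^{\ast}\omega_{F(v)}^{\,n}}{\int_X\omega_v^{\,n}}=|\xi|_v^{\,2}.
\]
The technical heart is the Jacobian identity, which should come out of differentiating the critical-point equation for $Z^v$ twice in $y$: this relates the complex Hessians of $F(v)$ at $y$ and of $v$ at $Z^v(y)$ through the complex Jacobian of $Z^v$, and taking determinants yields the Monge--Amp\`ere relation. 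This is precisely where the complex structure, encoded in the polarization $\hat u$, enters the argument decisively.
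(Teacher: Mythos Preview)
The paper does not prove this theorem; it merely states it and attributes it to Berndtsson--Cordero-Erausquin--Klartag--Rubinstein \cite{BCKR}, noting also a variant due to Semmes \cite{Se}. Your proposal is precisely the complex Legendre duality construction of \cite{BCKR} that the paper is citing, so there is nothing to compare: your approach \emph{is} the one the paper invokes.

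Your sketch is broadly faithful to \cite{BCKR}, but one point deserves care. You write ``real analyticity of $\omega_u$ lets $u$ be polarized''. This is not quite right: the hypothesis is that $\omega_u$ is real analytic, but $u$ itself need not be, since $\omega_0$ is merely smooth. What one polarizes is a \emph{local} real analytic K\"ahler potential $\psi$ for $\omega_u$ (which exists by the real analytic $\partial\bar\partial$-lemma), and the Legendre kernel $2\Ree\hat\psi(z,\bar y)$ is built from these local polarizations; the resulting $F(v)$ is nonetheless globally well defined because the ambiguity in $\psi$ is by pluriharmonic functions, which cancel in the transform. Apart from this, the outline is sound: the involution property reduces to $Z^{F(v)}=(Z^v)^{-1}$, the tangent formula $F_*\xi=-\xi\circ(Z^v)^{-1}$ is correct, and the isometry follows once one knows $(Z^v)^*\omega_{F(v)}=\omega_v$ (the map is in fact a symplectomorphism, not merely volume-preserving), which is the computation you correctly flag as the crux.
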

A variant of this was already known to Semmes, see \cite{Se}. In this paper we show that for such a symmetry $F$ about $u$ to exist it is also necessary that $\omega_u$ be analytic.
If $\cU\subset\cH$ is open, we call a $C^k$ map $F\colon\cU\to\cH$ an isometry if $F(\cU)$ is open, $F\colon\cU\to F(\cU)$ is a diffeomorphism, and
$$
|F_*\xi|_{F(u)}=|\xi|_u\qquad\text{for all}\quad u\in\cU,\quad\xi\in T_u\cH;
$$
we call it a $C^k$ symmetry about $u\in\cU$ if in addition $F(u)=u$ and $F_*\xi=-\xi$ for all $\xi\in T_u\cU$.
\begin{thm} If $F\colon \cU\to \cH$ is a $C^\infty$ symmetry about $u$, then $\omega_u$ is real analytic.
\end{thm}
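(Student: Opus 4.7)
The plan is to exploit the rigidity of isometries and the infinite-jet determinacy of geodesic symmetries to show that $C^\infty$-smoothness of $F$ forces $\omega_u$ to be real analytic.

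The starting observation is that the Mabuchi inner product at a point $v$ is the weighted $L^2$ inner product with measure $d\mu_v=\omega_v^n/\!\int_X\omega_v^n$, so the isometry condition asserts that, for each $v\in\cU$, the differential $F_*$ at $v$ is a linear isometry $L^2(\mu_v)\to L^2(\mu_{F(v)})$. Coupled with $F(u)=u$ and $F_*=-\id$ at $u$, differentiating this identity repeatedly in $v$ at $u$ produces a hierarchy of linear equations for the Taylor coefficients in the expansion $F(u+\zeta)=u-\zeta+\sum_{k\ge 2}F_k(\zeta)$, where each $F_k$ is a continuous symmetric $k$-homogeneous form on $C^\infty(X)$. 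Because the leading parts on the two sides of the isometry identity carry opposite signs, each equation in the hierarchy is uniquely solvable, and an induction (paralleling the finite-dimensional Koszul formula for symmetric spaces) should express $F_k$ as a universal local differential operator on $X$ whose coefficients are polynomial expressions in the jets of $\omega_u$, with order growing with $k$.

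The second and main step is to use that the Taylor data so constructed is realized by a genuine $C^\infty$ map $F$, and extract analyticity of $\omega_u$ from this. Fixing $x_0\in X$, one chooses a family of test functions $\zeta$ supported near $x_0$ and scaled so that $F_k(\zeta)(x_0)$ isolates the $k$-th Taylor coefficient of $\omega_u$ at $x_0$. Qualitative $C^\infty$-smoothness of $F$ in the Fr\'echet topology of $C^\infty(X)$ should then translate, through this localized probe, into Cauchy-type bounds on the jet of $\omega_u$ at $x_0$, which are precisely the bounds characterizing real analyticity at $x_0$.

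The main obstacle is this last conversion: by Borel's theorem, $C^\infty$-regularity in a Fr\'echet setting does not, by itself, constrain the growth of Taylor coefficients, so the argument must extract quantitative bounds from the finer structure of Step~1 rather than from smoothness of $F$ alone. One route is to exploit how the continuity of $v\mapsto F_*$ as a family of $L^2$-isometries rigidly couples successive $F_k$'s; another is to first upgrade $F$ itself to a real analytic map by applying analytic-coefficient elliptic regularity to the (nonlinear) system encoding the isometry condition, and then read off analyticity of $\omega_u$ from the analytic dependence of $F$ on its argument. Either way, the crux is the passage from the smooth to the analytic category --- the classical difficulty in this class of regularity problems.
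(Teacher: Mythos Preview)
Your proposal has a genuine gap, and you have essentially identified it yourself: the passage from $C^\infty$ to real analytic is the entire content of the theorem, and your Step~2 does not cross it. Borel's theorem is exactly the obstruction---the formal Taylor series you construct for $F$ at $u$ in Step~1 can be realized by a $C^\infty$ map regardless of how fast the jets of $\omega_u$ grow, so qualitative smoothness of $F$ imposes no Cauchy-type bounds. Neither of your suggested escape routes works as stated. The ``rigid coupling of successive $F_k$'s'' is just the inductive determination you already described; it does not by itself produce factorial growth control. The second route is worse: the isometry condition $|F_*\xi|_{F(v)}=|\xi|_v$ is an algebraic (integral) identity in $v$, not an elliptic PDE, so there is no analytic-coefficient elliptic regularity to invoke; and even if you somehow knew $F$ were real analytic as a map of Fr\'echet spaces, you have not explained how to read off pointwise analyticity of $\omega_u$ on $X$ from that.

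The paper's proof is entirely different and avoids Taylor expansions of $F$ altogether. It passes through the WZW equation (equivalently the homogeneous complex Monge--Amp\`ere equation on $D\times X$). The symmetry $F$ is used only once, to prove a \emph{reflection principle}: a solution of the WZW equation on a half-disc with boundary value $u$ on the real segment $I$ extends across $I$ by $f(s)\mapsto F(f(\bar s))$. This converts boundary Cauchy data into interior Cauchy data. Donaldson's solvability theorem for the Monge--Amp\`ere Dirichlet problem guarantees that such boundary data are plentiful (their linear span is dense). Finally, microlocal analysis enters: for a two-sided solution, the Bedford--Kalka foliation forces the analytic wave front set of $u$ to annihilate the symplectic gradients $\mathrm{sgrad}_u\,\xi$ of all admissible normal derivatives $\xi$. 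Density then kills $\mathrm{WF}_{\mathrm A}(u)$ entirely, hence $u$ is real analytic. The analyticity comes from analytic wave front set machinery applied to harmonic extensions along holomorphic leaves---a mechanism your approach never touches.
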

Let $D\subset\bC$ be smooth Jordan domain, and pull back $\omega_0$ by the projection $D\times X\to X$ to a $(1,1)$ form $\omega$. 
The proof of Theorem 1.2 is obtained through the study of the homogeneous Monge--Amp\`ere equation
\begin{equation}\begin{aligned}
(\omega+i\partial\bar\partial w)^{n+1}&=0\qquad\text{on}\quad \bar D\times X\\
\omega+i\partial\bar\partial w|\{s\}\times X&>0\qquad \text{for all } s\in\bar D,
\end{aligned}\end{equation}
where $\omega$ is assumed analytic. 
The idea is that a symmetry $F$ about $u$ gives rise to a reflection principle.
 Given an analytic arc $I\subset \partial D$, solutions of (1.1) can be continued across $I\times X$ as solutions, provided $w(s,\cdot)=u$ for all $s\in I$. 
In [L3] we imposed  restrictions on the analytic wave front set of solutions of the Monge--Amp\`ere equation in two sided neighborhoods of $I\times X$. 
These restrictions, combined with Donaldson's theorem on solvability of Dirichlet problems for (1.1), imply that in fact the analytic wave front set of $u$  must be empty, and so $u$ must be analytic.

The paper  \cite{L3} studies more general isometries, and most of its results apply to isometries that are merely $C^1$.
 By contrast, we could not prove Theorem 1.2 under the assumption that $F$ is $C^1$, because we do not know whether such symmetries give rise to a reflection principle for the Monge--Amp\`ere equation. 
The gap between $C^1$ and $C^\infty$ is not as wide as it may seem, though: according to \cite[Corollary 5.2]{L2}, $C^2$ isometries are automatically $C^\infty$.

\section{Vector valued Poisson equation}

The link between the symmetry $F$ of Theorem 1.2 and reflection for the Monge--Amp\`ere equation is Donaldson's version of the WZW equation.
 This is a second order partial differential equation for $\cH$ valued functions defined on open subsets of $\bC$. The equation is nonlinear, but its second order part is just the Laplacian.
In this section we quickly verify that basic regularity theory of the Laplacian carries over from scalar valued functions to functions valued in locally convex (topological vector) spaces.

Fix a locally convex space $V$. That is, $V$ is a real vector space on which a family $\cP$ of seminorms induces a topology. 
We assume that the topology is Hausdorff and sequentially complete, like $C^\infty(X)$. 
For basic notions of calculus and geometry in $V$ see e.g. \cite{L2}. Write $V'$ for the space of continuous linear forms $V\to\bR$, and if $l\in V'$ and $p\in\cP$,
$$
p^*(l)=\sup\{l(v)\colon v\in V,\, p(v)\le 1\}\le\infty.
$$

We will work with H\"older classes of  $V$ valued functions. Let $\Omega\subset\bR^m$ be open and $k\in(0,\infty)\setminus\bN$. 
A function $f\colon\Omega\to V$ is $C^k$ if it is bounded, has partials of all orders $\le [k]$ (the integer part of $k$), and any of those partials $g=\partial^\alpha f$  with $|\alpha|\le [k]$ satisfies, with $\{k\}=k-[k]$,
$$
\sup\Big\lbrace\frac{p\big(g(s)-g(t)\big)}{|s-t|^{\{k\}}}\colon s,t\in\Omega,\, s\neq t\Big\rbrace=\tilde p_{\{k\}}(g)<\infty\qquad\text{for all }p\in\cP.
$$
The space of such functions is denoted  $C^k(\Omega, V)$, and the space of functions that are locally such,  $C^k_{\text{loc}}(\Omega, V)$. 
If $0<k<1$, the definition of $C^k$ spaces makes sense for not necesarily open subsets $\Omega\subset\bR^m$. Each seminorm $p\in\cP$ determines a seminorm
$$
p_k(f)=\max\{\sup_\Omega p(f),\tilde p_{\{k\}}(\partial^\alpha f)\colon |\alpha|\le [k]\}
$$ 
on  $C^k(\Omega, V)$. When $V=\bR$ and $p(v)=|v|$, we write, as usual, $|\quad|_k$ for $p_k$.
\begin{lem}A function $f\colon\Omega\to V$ is in  $C^k(\Omega, V)$ if and only if for each $p\in\cP$ there is a $C\in(0,\infty)$ such that for every $l\in V'$
\begin{equation}
l\circ f\in C^k(\Omega,\bR)\quad\text{and}\quad |l\circ f|_k\le Cp^*(l).
\end{equation}
\end{lem}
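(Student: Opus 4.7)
The forward implication is a direct computation. Any continuous linear form $l\in V'$ commutes with partial differentiation of $V$-valued differentiable functions, so $\partial^\alpha(l\circ f)=l\circ\partial^\alpha f$, and the defining inequality $|l(v)|\le p^*(l)p(v)$ bounds each ingredient of $|l\circ f|_k$ by $p^*(l)$ times the corresponding ingredient of $p_k(f)$. Taking $C=p_k(f)$ yields (2.1).

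For the converse I plan to argue by induction on $[k]$. The key tool beyond the hypothesis is the Hahn-Banach consequence
$$
p(v)=\sup\{|l(v)|\colon l\in V',\ p^*(l)\le 1\},\qquad v\in V,\ p\in\cP,
$$
together with sequential completeness of $V$. When $0<k<1$ the base case is immediate: applying the displayed identity to $v=f(s)$ and to $v=f(s)-f(t)$, the hypothesis yields $\sup_\Omega p(f)\le C_p$ and $\tilde p_k(f)\le C_p$.

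For the inductive step, suppose $[k]\ge 1$. I would first construct the partials $\partial_i f(s)$ as limits of the difference quotients $g_h(s)=(f(s+he_i)-f(s))/h$. Testing with $l\in V'$, either the fundamental theorem of calculus combined with the $\{k\}$-H\"older regularity of $\partial_i(l\circ f)$ (when $1<k<2$) or Taylor's theorem with remainder (when $k\ge 2$) bounds $|l(g_h(s)-g_{h'}(s))|$ by $|l\circ f|_k\,\rho(h,h')\le C_pp^*(l)\,\rho(h,h')$, where $\rho(h,h')\to 0$ as $h,h'\to 0$. The Hahn-Banach identity then yields $p(g_h(s)-g_{h'}(s))\le C_p\rho(h,h')$, so $\{g_h(s)\}$ is Cauchy; sequential completeness supplies $\partial_i f(s)\in V$. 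Pushing $l$ under the limit gives $l\circ\partial_i f=\partial_i(l\circ f)$, whence $|l\circ\partial_i f|_{k-1}\le|l\circ f|_k\le C_pp^*(l)$. The inductive hypothesis applied to $\partial_i f$ (with exponent $k-1$) now produces all the required partials $\partial^\alpha f$ for $1\le|\alpha|\le[k]$ along with their bounds; boundedness of $f$ and the top-order H\"older seminorm estimates are handled exactly as in the base case.

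The main obstacle is the passage from weak (pointwise-in-$l$) differentiability to strong ($V$-valued) differentiability. Merely knowing that each $l\circ f$ is $C^k$ does not suffice --- one must exploit the \emph{uniform} bound $|l\circ f|_k\le Cp^*(l)$ across $l\in V'$. This uniformity, together with the Hahn-Banach realization of $p$ as a supremum over the polar of its unit ball, is precisely what converts scalar estimates into $V$-seminorm estimates strong enough to invoke sequential completeness.
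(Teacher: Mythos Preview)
Your proposal is correct and follows essentially the same route as the paper: Hahn--Banach to recover $p$ as a supremum over $\{l:p^*(l)\le 1\}$, the fundamental theorem of calculus to show difference quotients are Cauchy, sequential completeness to obtain the partials, and induction on $[k]$. The paper's argument is marginally simpler in that it does not split the inductive step into cases $1<k<2$ versus $k\ge 2$; since the $C^k$ norm here controls the $\{k\}$-H\"older seminorm of \emph{all} partials of order $\le[k]$ (not just the top-order ones), the single estimate $\big|\int_0^1(\partial_i(l\circ f)(s+\lambda xe_i)-\partial_i(l\circ f)(s+\mu xe_i))\,dx\big|\le Cp^*(l)|\lambda-\mu|^{\{k\}}$ works uniformly for all $k>1$.
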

\begin{proof}We only deal with the `if' implication, the other direction being obvious. Assume (2.1), and first consider $k\in(0,1)$. By the Banach--Hahn theorem, for $p\in\cP$
$$
p\big(f(s)\big)=\sup_l\big|l\big(f(s)\big)\big|\quad\text{ and }\quad p\big(f(s)-f(t)\big)=\sup_l\big| l\big(f(s)\big)-l\big(f(t)\big)\big|,
$$
where $\sup$ is over $l\in V'$ such that $p^*(l)\le 1$. In view of (2.1) these suprema are $\le C$, respectively, $\le C|s-t|^k$, and $f\in  C^k(\Omega, V)$ indeed follows. 

Second consider $k>1$. We claim that (2.1) implies that $f$ has first partials, i.e., $\big(f(s+\lambda t)-f(s)\big)/\lambda$ has a limit as $\lambda\to 0$ for any $t\in\bR^m$ parallel to a coordinate axis. 
Say, $t=(1,0,\ldots,0)$. Given $l\in V'$, with $g=l\circ f$, $s\in\Omega$, and $\lambda,\mu\in \bR$ small we have
\begin{multline*}
\Big|\frac{g(s+\lambda t)-g(s)}\lambda-\frac{g(s+\mu t)-g(s)}\mu\Big|=\\
\Big|\int_0^1\big(\partial_1g(s+\lambda xt)-\partial_1 g(s+\mu xt)\big)\,dx\Big|
\le Cp^*(l)|\lambda-\mu|^{\{k\}}.
\end{multline*}
This implies, again by the Banach--Hahn theorem, that  $\big(f(s+\lambda t)-f(s)\big)/\lambda$ is Cauchy, hence converges, as $\lambda\to 0$. 
To conclude, we note that the partials $\partial_i f$ satisfy $l\circ\partial_i f=\partial_i(l\circ f)\in  C^{k-1}(\Omega, V)$, and the lemma for general $k$ follows by induction on $[k]$.
\end{proof}

With the help of this lemma  it is routine to extend various results for elliptic linear pde's from scalar to vector valued functions. 
When $k=0,1,\ldots,\infty$, we continue to write  $C^k_{\text{loc}}(\Omega, V)$ for the space of functions that have continuous partials up to order $k$.
This is what we will need:
\begin{lem}Let $\Omega\subset \bC$ be open, $f\in C^1_{\text{loc}}(\Omega,V)$ and $g\in  C^k_{\text{loc}}(\Omega, V)$, with nonintegral $k$. 
If $f|(\Omega\setminus\bR)$ is $C_{\text{loc}}^2$ and satisfies $\Delta f=g$ on $\Omega\setminus\bR$, then $f\in  C^{k+2}_{\text{loc}}(\Omega, V)$.
\end{lem}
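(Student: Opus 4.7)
The plan is to reduce to scalar valued functions via Lemma 2.1 and then appeal to classical Schauder estimates for the Laplacian. Fix an arbitrary $l\in V'$ and write $u=l\circ f$, $h=l\circ g$; by continuity and linearity of $l$, $u\in C^1_{\text{loc}}(\Omega,\bR)$, $h\in C^k_{\text{loc}}(\Omega,\bR)$, and $\Delta u=h$ pointwise on $\Omega\setminus\bR$. Moreover, the ``only if'' direction of Lemma 2.1, applied to $f$ and $g$ on a fixed compact $K\Subset\Omega$, produces seminorms $p,q\in\cP$ and constants $C_1,C_2$ depending on $K$ but not on $l$ such that $|u|_{1,K}\le C_1 p^*(l)$ and $|h|_{k,K}\le C_2 q^*(l)$.

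The first real step is to upgrade $\Delta u=h$ from $\Omega\setminus\bR$ to a distributional identity on all of $\Omega$. For any $\phi\in C^\infty_c(\Omega)$, since $u\in C^1(\Omega)$ one integration by parts gives $\int_\Omega u\,\Delta\phi=-\int_\Omega\nabla u\cdot\nabla\phi$. Split the right-hand side into $\Omega_\pm=\Omega\cap\{\pm\Imm z>0\}$; on each half $u$ is $C^2$, so a second integration by parts yields $\int_{\Omega_\pm}\phi h$ together with a boundary contribution $\pm\int_{\bR\cap\Omega}\phi(x,0)\,\partial_y u(x,0^\pm)\,dx$. Because $u\in C^1(\Omega)$, $\partial_y u$ is continuous across $\bR$ and the two boundary contributions cancel. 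Hence $\int_\Omega u\,\Delta\phi=\int_\Omega\phi h$, i.e., $\Delta u=h$ in the distribution sense on all of $\Omega$.

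Now the classical interior Schauder estimate applies: for concentric balls $B'\Subset B\Subset\Omega$ there is a constant $C=C(B,B')$ such that every distributional solution of $\Delta u=h$ on $B$ with $h\in C^k(B)$ satisfies $|u|_{k+2,B'}\le C(|h|_{k,B}+|u|_{0,B})$. Combining with the bounds above gives $|u|_{k+2,B'}\le C'r^*(l)$ with $r=p+q\in\cP$ and $C'$ independent of $l$. Since $l$ was arbitrary, the ``if'' direction of Lemma 2.1, applied on $B'$, forces $f\in C^{k+2}(B',V)$; as $B'$ is arbitrary, $f\in C^{k+2}_{\text{loc}}(\Omega,V)$.

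The step that demands care is the distributional identity across $\bR$: the global $C^1$ hypothesis on $f$ is precisely what prevents a jump in $\partial_y u$ and hence a singular contribution to $\Delta u$ supported on the real line. Without it, the Laplacian of $u$ would carry a delta-type term concentrated on $\bR\cap\Omega$ and the scalar regularity argument would break down. Everything else is a routine vector valued packaging of standard linear elliptic regularity.
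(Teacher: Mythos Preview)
Your argument is essentially correct and follows the paper's strategy: reduce to the scalar case via Lemma~2.1, use the global $C^1$ hypothesis to kill the contribution along $\bR$ (you do this by showing $\Delta u=h$ in the sense of distributions; the paper instead adds Green's representation formulas on $\Omega_\pm$, so that the line integrals along $\bR\cap\partial\Omega_\pm$ cancel), and then invoke interior Schauder estimates.

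There is one slip in the bookkeeping at the end. The ``only if'' direction of Lemma~2.1 does not \emph{produce} particular seminorms $p,q$; it asserts the bound for \emph{every} $p\in\cP$. Correspondingly, your inequality $|u|_{k+2,B'}\le C'\,r^*(l)$ with $r=p+q$ does not follow, since $(p+q)^*(l)\le\min\big(p^*(l),q^*(l)\big)$, which goes the wrong way; and $\cP$ need not contain $p+q$ in any case. The fix is immediate: for each fixed $p\in\cP$, apply the ``only if'' direction with that same $p$ to both $f$ and $g$, obtaining $|l\circ f|_{0,B}\le C_{f,p}\,p^*(l)$ and $|l\circ g|_{k,B}\le C_{g,p}\,p^*(l)$; the Schauder estimate then gives $|l\circ f|_{k+2,B'}\le C_p\,p^*(l)$ for every $p\in\cP$, which is exactly the hypothesis of the ``if'' direction.
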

\begin{proof}It suffices to prove when $\Omega$ is a disc and $f,g$ are $C^1$, respectively $C^k$, on a neighborhood of $\bar \Omega$. Let
$$
\Omega_\pm=\{s\in \Omega\colon\Imm s\gtrless 0\},
$$
either of which may be empty. If $V=\bR$, by Green's identity
$$
\int_{\Omega_+}g(t)\log|s-t|+\int_{\partial \Omega_+}\Big(f(t)\frac{\partial\log|s-t|}{\partial n_t}-\frac{\partial f(t)}{\partial n_t}\log|s-t|\Big)=\begin{cases}2\pi f(s),&\text{if } s\in \Omega_+ \\
0,&\text{if } s\notin \bar \Omega_+ .\end{cases}
$$
The integrals are for the $t$ variable, with respect to area or arc length measure, and $\partial/\partial n_t$ stands for outward normal derivation. 
There is a similar formula for $\Omega_-$. When the two are added, the line integrals over $\bR\cap\partial \Omega_\pm$ cancel, and we obtain
\begin{equation}
2\pi f(s)=\int_\Omega g(t)\log|s-t|+\int_{\partial \Omega}\Big(f(t)\frac{\partial\log|s-t|}{\partial n_t}-\frac{\partial f(t)}{\partial n_t}\log|s-t|\Big)
\end{equation}
when $s\in \Omega\setminus\bR$. By continuity, this holds  on all of $\Omega$. 
On any relatively compact $\Omega'\subset \Omega$ the line integral produces a $C^\infty$ function, all whose H\"older norms are controlled by the $C^1$ norm of $f$. If $0<k<1$, then on $\Omega'$ the $C^{k+2}$ norm of the area integral is also controlled, see \cite[Theorem 4.4]{GT}, and we obtain
\begin{equation}
\big|f|\Omega'\big|_{k+2}\le c_k(|f|_1+|g|_k).
\end{equation}
This holds also for $k>1$. Indeed, if $f$ is known to be $C^\infty$, the estimate (2.3) follows by taking $\partial_i$ of $\Delta f=g$ and arguing by induction; for general $f$ by reducing the estimate to the smooth case via convolution with mollifiers.

This takes care of $V=\bR$. With a general $V$ we have the same estimates for $l\circ f$ in place of $f$, where $l\in V'$. By Lemma 2.1 $f\in   C^{k+2}_{\text{loc}}(D, V)$ follows.
\end{proof}

\section{The WZW equation}

We return to the K\"ahler manifold $(X,\omega_0)$, its space $\cH$ of K\"ahler potentials, and the notation in the Introduction. 
The Levi--Civita connection on $\cH$ allows one covariantly to differentiate vector fields along curves. Suppose $I\subset\bR$ is an interval, $f\in C^1( I,\cH)$, and $\xi$ is a $C^1$ section of $f^*T\cH$, i.e., a $C^1$ map $I\ni t\mapsto \xi(t)\in T_{f(t)}\cH\approx C^\infty(X)$. According to Mabuchi \cite[(2.4.1)]{M}, the covariant derivative  of $\xi$ is
\begin{equation}
\nabla_t\xi(t)=\frac{d\xi(t)}{dt}-\frac12\Big(\grad\frac{du(t)}{dt}\, ,\,\grad\xi(t)\Big)\in C^\infty(X)\approx T_{u(t)}\cH.
\end{equation}
 Here $\gru$, taken on $X$, and the inner product $(\, ,\,)$ are with respect to the K\"ahler metric of $\omega_{f(t)}$.

If $D\subset\bC$ is open and $f\in C^1( D,\cH)$, one can define complex covariant derivatives $\nabla_s,\nabla_{\bar s}$ as well, that act on sections of the complexified bundles $f^*(\bC\otimes T\cH)$.
 Writing $s=\sigma+it$, we have for example $\nabla_s=(1/2)(\nabla_\sigma-i\nabla_t)$. 
For functions with values in $\cH$ we use $\partial_\sigma,\partial_t,\partial_s=(1/2)(\partial_\sigma-i\partial_t)$, etc. to denote partial derivatives with respect to the variables indicated. We will also use subscript notation for partial derivatives.
Finally, if $u\in\cH$,  we let $\{\, ,\,\}_u$ or just $\{\, ,\,\}$ denote the Poisson bracket on $T_u\cH\approx C^1(X)$, or on $\bC\otimes T_u\cH$,  induced by the symplectic form $\omega_u$.

The WZW equation is
\begin{equation}
\Ree\nabla_s\partial_{\bar s}f=\frac i2\{\partial_s f,\partial_{\bar s}f\}_{f(s)}
\end{equation}
for a $C^2$ function $f\colon D\to\cH$. 
This is essentially the same as the equation Donaldson introduced in \cite{D1}, but, due to differing conventions, not quite the same. 
For this reason we quickly rederive its connection with the Monge--Amp\`ere equation on $D\times X$. Any $f\in C^2(D, X)$ defines a $w\in C^2(D\times X)$ by $f(s)=w(s,\cdot)$, and $f$ solves the WZW equation (3.2) if and only if $w$ solves the Monge--Amp\`ere equation
\begin{equation}
(\omega+i\partial\bar\partial w)^{n+1}=0.
\end{equation}
Here, as in the Introduction, $\omega$ is the pullback of $\omega_0$ to $D\times X$.
 To verify the equivalence of (3.2), (3.3) at some $\tilde s\in D$, $\tilde x\in X$, we introduce local coordinates $x_1,\ldots,x_n$ on $X$ centered at $\tilde x$ so that $\omega_0+i\partial\bar\partial f(\tilde s)=i\sum dx_j\wedge d\bar x_j$ at $\tilde x$. 
At $\tilde x$, therefore, the K\"ahler metric is twice the Euclidean metric. 
In some neighborhood of $\tilde x$ we can write $\omega_0=i\partial\bar\partial p$ with a smooth potential $p$, and then (3.3) becomes for $v(s,x)=p(x)+w(s,x)$
\begin{equation}
(\partial\bar\partial v)^{n+1}=0\qquad\text{or} \qquad \det\big(v_{x_j\bar x_k} \big)_{0\le j,k\le n}=0.
\end{equation}
Here the meaning of $x_0$ is $s$. On the one hand, Schur's formula for the determinant of a block matrix, 
$$
\det\begin{pmatrix}A & B \\ C& D\end{pmatrix}=\det(A-BD^{-1}C)\det D,
$$
shows that (3.4) is equivalent, at $(\tilde s,\tilde x)$, to
\begin{equation}
0=v_{s\bar s} -(v_{s\bar x_k})(v_{x_j\bar x_k})^{-1}(v_{x_j\bar s})^T=w_{s\bar s} -\sum_{j=1}^n |w_{ x_j\bar s}|^2.
\end{equation}
Since $2\{f_s,f_{\bar s}\}=i\{f_\sigma,f_\tau\}=\sum_{j=1}^n \big(w_{\bar x_j\sigma} w_{x_j t}-w_{x_j\sigma}w_{\bar x_j t}\big)$ and
$$
|\grad f_\sigma|^2+|\grad f_t|^2=2\sum_{j=1}^n\big(|w_{x_j\sigma}|^2+|w_{ x_j t}|^2\big),
$$
the sum in (3.5) can be written
$$
\frac14\sum_{j=1}^n|w_{x_j\sigma}+iw_{x_j t}|^2=\frac18\big(|\grad f_\sigma|^2+|\grad f_t|^2\big)+\frac i2\{f_s,f_{\bar s}\}.
$$

On the other hand (3.1) gives at $\tilde s$, $\tilde x$
\begin{equation}\begin{aligned}
\Ree\nabla_s\partial_{\bar s}f&=\frac 14(\nabla_\sigma f_\sigma+\nabla_tf_t)=\frac 14(f_{\sigma\sigma}+f_{tt})-\frac 18\big(|\grad f_\sigma|^2+|\grad f_t|^2\big) \\
&=w_{s\bar s}-\sum_{j=1}^n|w_{\bar x_j s}|^2+\frac i2\{f_s,f_{\bar s}\}.
\end{aligned}\end{equation}
Comparing this with (3.5) proves that (3.2) and (3.3) are indeed equivalent.

In \cite{L3} we studied isometries $F\colon\cU\to\cH'$ into the space of K\"ahler potentials on another K\"ahler manifold $(X',\omega_0')$, and proved (assuming $X$ connected) that their differentials either preserve or reverse Poisson brackets: if $u\in\cU$ then
\begin{align*}
&\{F_*\xi,F_*\eta\}_{F(u)}=F_*\{\xi,\eta\}_u\quad\text{ for all }\xi,\eta\in T_u\cU, \quad\text { or} \\
& \{F_*\xi,F_*\eta\}_{F(u)}=-F_*\{\xi,\eta\}_u\quad\text{ for all }\xi,\eta\in T_u\cU,
\end{align*}
see Theorem 1.1 there. If $\cU$ is connected, then of course the same alternative will hold at all $u\in\cU$, and we say that $F$ preserves (reverses) Poisson brackets if the first (second) alternative holds.

\begin{lem}Consider a solution $f$ of the WZW equation (3.2).

(a) If $\phi \colon D'\to D$ is a holomorphic map, $D'\subset\bC$ open, then $f'=f\circ\phi$ also solves the WZW equation.

(b) Let $\cH'$ be the space of K\"ahler potentials of another K\"ahler manifold and $\cU\subset\cH$ a neighborhood of $f(D)$. If $F\in C^2(\cU,\cH')$ is an isometry that preserves Poisson brackets, then $F\circ f$ also solves the WZW equation. If $F$ reverses Poisson brackets, then $s\mapsto F(f(\bar s))$ solves the WZW equation.
\end{lem}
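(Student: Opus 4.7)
I will use the equivalence between (3.2) and the Monge--Amp\`ere equation (3.3) just verified in this section to handle (a), and will reduce (b) to the fact that isometries preserve covariant derivatives combined with the stated Poisson-bracket behavior of $F$. For (a), let $w(s,x)=f(s)(x)$, so $(\omega+i\partial\bar\partial w)^{n+1}=0$ on $D\times X$. For $\phi\colon D'\to D$ holomorphic, the map $\Phi=\phi\times\id_X\colon D'\times X\to D\times X$ is holomorphic and satisfies $\Phi^*\omega=\omega$, since $\omega$ is the pullback of $\omega_0$ through projection onto $X$. Because $\Phi^*$ commutes with $\partial\bar\partial$, $\Phi^*(\omega+i\partial\bar\partial w)=\omega+i\partial\bar\partial w'$ where $w'=w\circ\Phi$ is the function associated with $f'=f\circ\phi$. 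Taking $(n+1)$st wedge powers, $(\omega+i\partial\bar\partial w')^{n+1}=\Phi^*(\omega+i\partial\bar\partial w)^{n+1}=0$, so $w'$ solves (3.3) and therefore $f'$ solves (3.2).

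For (b) I use that a $C^2$ Riemannian isometry preserves the Levi--Civita connection, so for any vector field $\xi$ along $f$ (extended $\bC$-linearly to complex vectors), $\nabla_s(F_*\xi)=F_*\nabla_s\xi$. Applied with $\xi=\partial_{\bar s}f$ and taking real parts, $\Ree\nabla_s\partial_{\bar s}(F\circ f)=F_*\Ree\nabla_s\partial_{\bar s}f$. If $F$ preserves Poisson brackets, then $\{\partial_s(F\circ f),\partial_{\bar s}(F\circ f)\}_{F(f(s))}=F_*\{\partial_s f,\partial_{\bar s}f\}_{f(s)}$, and (3.2) for $f$ immediately yields (3.2) for $F\circ f$.

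If $F$ instead reverses brackets, I first analyze $h(s)=f(\bar s)$. Writing $s=\sigma+it$, the chain rule gives $\partial_s h(s)=(\partial_{\bar s}f)(\bar s)$ and $\partial_{\bar s}h(s)=(\partial_s f)(\bar s)$; substituting into (3.1) together with the symmetry $(A,B)=(B,A)$ of the $\bC$-bilinearly extended Riemannian pairing produces $\nabla_s\partial_{\bar s}h(s)=(\nabla_s\partial_{\bar s}f)(\bar s)$. On the other hand, antisymmetry of $\{\,,\,\}$ gives $\{\partial_s h,\partial_{\bar s}h\}_{h(s)}=-\{\partial_s f,\partial_{\bar s}f\}_{f(\bar s)}$, so (3.2) at $\bar s$ translates into the sign-reversed equation $\Ree\nabla_s\partial_{\bar s}h=-\tfrac i2\{\partial_s h,\partial_{\bar s}h\}_h$ for $h$. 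Running the preserving-case argument on $F\circ h$ now works because the bracket-reversal of $F$ contributes a second sign flip that cancels the first, yielding (3.2) for $s\mapsto F(f(\bar s))$.

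The step requiring the most care is the analysis of $h(s)=f(\bar s)$: the swap of $\partial_s$ and $\partial_{\bar s}$ under $s\mapsto\bar s$, the antisymmetry of $\{\,,\,\}$, and the bracket-reversal of $F$ must be combined so that exactly two sign changes occur, producing (3.2) rather than its conjugate. No deeper analytic obstacle is anticipated beyond this bookkeeping of conjugation signs.
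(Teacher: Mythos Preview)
Your proof is correct and follows essentially the same approach as the paper: part (a) via pullback of the Monge--Amp\`ere form under $\phi\times\id_X$, and part (b) via $F_*$ commuting with covariant differentiation together with the Poisson-bracket behavior of $F$. You spell out the bracket-reversing case with $h(s)=f(\bar s)$ in more detail than the paper, which leaves it implicit; one small remark is that the identity $\nabla_s\partial_{\bar s}h(s)=(\nabla_s\partial_{\bar s}f)(\bar s)$ really uses torsion-freeness of the Levi--Civita connection (so that $\nabla_{\bar s}\partial_s f=\nabla_s\partial_{\bar s}f$) rather than merely the symmetry of the pairing, though since only the real part $\tfrac14(\nabla_\sigma f_\sigma+\nabla_t f_t)$ enters the WZW equation this distinction is immaterial.
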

\begin{proof}One way to prove (a) is to note that the  functions $w\in C^2(D\times X)$ and $w'\in C^2(D'\times X)$ associated with $f,f'$ are related by $w'=(\phi\times\id_X)^*w$; hence if 
$(\omega+i\partial\bar\partial w)^{n+1}=0$, then   $(\omega+i\partial\bar\partial w')^{n+1}=0$.
 Part (b) follows because for a $C^1$ vector field $\xi$ along $f$  Levi--Civita covariant differentiation $\nabla'$ on $\cH'$ satisfies
$$
\nabla'_\sigma F_*\xi=F_*\nabla_\sigma\xi,\quad \nabla'_t F_*\xi=F_*\nabla_t\xi,\quad\text{and so } \nabla'_s F_*\xi=F_*\nabla_s\xi.
$$
\end{proof}
Now we come to the reflection principle.
\begin{lem}Suppose there is a $C^\infty$ symmetry about $u\in\cH$.
 Let $D\subset\bC$ be a Jordan domain, $I\subset\partial D$ an open analytic arc, and $f\in C^k_{\text{loc}}(D,\cH)$ a solution of the WZW equation (3.2), $k>2$ not an integer (possibly $k=\infty$). If 
$$
\lim_{s\to s_0} f(s)= u\qquad\text{for all }s_0\in I,
$$
then $f$ continues across $I$ as a $C^k_{\text{loc}}$  solution of the WZW equation.
\end{lem}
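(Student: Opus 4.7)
The plan is a reflection-principle argument. Step 1: determine how $F$ interacts with Poisson brackets. By the dichotomy from \cite[Thm.~1.1]{L3}, $F$ satisfies $\{F_*\xi,F_*\eta\}_{F(u)}=\varepsilon F_*\{\xi,\eta\}_u$ with a common sign $\varepsilon=\pm 1$. Evaluating at $u$ with $F_*=-\id$, bilinearity makes the left side $\{\xi,\eta\}_u$ while the right is $-\varepsilon\{\xi,\eta\}_u$; hence $\varepsilon=-1$ and $F$ reverses Poisson brackets. Step 2: straighten the arc. Since $I$ is analytic, a local biholomorphism maps a neighborhood of $I$ onto a neighborhood of a real interval with $I\mapsto\bR$ and $D$ going into $\{\Imm s>0\}$; by Lemma 3.1(a) this preserves solutions of the WZW equation, so we may assume WLOG $I\subset\bR$ and $D$ lies locally in the upper half plane. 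Step 3: reflect. With $D^*=\{\bar s:s\in D\}$, define
$$
\tilde f(s)=\begin{cases}f(s),&s\in D,\\ u,&s\in I,\\ F(f(\bar s)),&s\in D^*,\end{cases}
$$
which, by Lemma 3.1(b), solves the WZW equation separately on $D$ and on $D^*$; the hypothesis $\lim_{s\to s_0}f(s)=u$ and $F(u)=u$ make $\tilde f$ continuous across $I$.

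The remaining task is to upgrade continuity of $\tilde f$ to $C^k_{\text{loc}}$ on $D\cup I\cup D^*$. In local normal coordinates on $X$, as in the derivation of (3.5)--(3.6), the WZW equation takes the semilinear form $\Delta\tilde f=G(\tilde f,\partial_s\tilde f,\partial_{\bar s}\tilde f)$ where $G$ is a smooth expression in its arguments that vanishes when $\partial\tilde f=0$. Once $\tilde f\in C^{1,\alpha}_{\text{loc}}(D\cup I\cup D^*,C^\infty(X))$ is established for some $\alpha\in(0,1)$, Lemma 2.2 (with $V=C^\infty(X)$) applies iteratively: taking $g=G(\tilde f,\partial\tilde f)\in C^\alpha_{\text{loc}}$ gives $\tilde f\in C^{2+\alpha}_{\text{loc}}$, and each further round gains an order of regularity, so after finitely many iterations (or in the limit if $k=\infty$) one reaches the $C^k_{\text{loc}}$ regularity that $\tilde f$ already has on $D$ and $D^*$ separately. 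The equation then extends to $I$ by continuity.

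The main obstacle is the initial $C^{1,\alpha}$-step across $I$, given only continuity. The key structural point is that $F_*|_u=-\id$ forces $F(u+v)=u-v+O(\|v\|^2)$, so for $s\in D^*$ near $I$ one has $\tilde f(s)-u=-(f(\bar s)-u)+O(\|f(\bar s)-u\|^2)$; i.e.\ $\tilde f-u$ is the odd reflection of $f-u$ across $\bR$ up to a quadratically vanishing correction. Applying Green's representation (2.2) on a small disc $\Omega$ centered on $I$, the equation $\Delta\tilde f=G(\tilde f,\partial\tilde f)$ holds classically on $\Omega\setminus I$, with $G$ bounded near $I$ (since $G=0$ whenever $\partial\tilde f=0$, and $\partial\tilde f$ is tangential to $I$ where $\tilde f$ is constant), and the odd-reflection structure of the leading term ensures that the distributional Laplacian of $\tilde f$ on $\Omega$ carries no singular part on $I$. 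Standard elliptic regularity for Poisson's equation then yields $\tilde f\in C^{1,\alpha}_{\text{loc}}(\Omega)$, launching the bootstrap that completes the proof.
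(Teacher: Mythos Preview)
Your overall architecture matches the paper's: show that $F$ reverses Poisson brackets, straighten $I$ to a segment of $\bR$, reflect $f$ across $\bR$ via $s\mapsto F(f(\bar s))$, and then bootstrap with Lemma 2.2. The difficulty, as you identify, is the initial $C^{1,\alpha}$ regularity across $I$, and here your argument has a genuine gap. You attempt to work from mere continuity of $\tilde f$ across $I$ and to argue that the right-hand side $G(\tilde f,\partial\tilde f)$ is bounded near $I$ and that the distributional Laplacian of $\tilde f$ carries no singular part on $I$. But both claims already presuppose control of $\partial\tilde f$ up to $I$: $G$ is quadratic in first derivatives, and the absence of a singular line measure in $\Delta\tilde f$ amounts precisely to the one-sided normal derivatives existing and agreeing on $I$, which is the $C^1$ matching you are trying to prove. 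The remark that ``$\partial\tilde f$ is tangential to $I$ where $\tilde f$ is constant'' is circular, and the odd-reflection heuristic does not by itself rule out $\partial_t f$ blowing up as $\Imm s\to 0^+$.

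The paper bypasses this by using the hypothesis $f\in C^k_{\text{loc}}(D,\cH)$ with $k>2$ directly rather than only the continuous extension to $I$. From the H\"older continuity of the partials of order $\le 2$ one argues that $f$ and these partials extend continuously to $D\cup I$, with the extensions in $C^{\{k\}}_{\text{loc}}(D\cup I,V)$; in particular $\partial_\sigma f=0$ along $I$ since $f\equiv u$ there. The reflected piece $s\mapsto F(f(\bar s))$ then has, at $s\in I$, tangential derivative $F_*\partial_\sigma f(\bar s)=(-\id)\cdot 0=0$ and normal derivative $F_*\big(-\partial_t f(\bar s)\big)=(-\id)\big(-\partial_t f(s)\big)=\partial_t f(s)$, so the first derivatives match across $I$ and $\tilde f\in C^{1+\{k\}}_{\text{loc}}(D\cup I\cup D_-,V)$ follows at once. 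That is the missing idea: no distributional argument is needed, only the elementary observation that the one-sided $C^1$ data exist and agree on $I$ because $F_*|_{T_u\cH}=-\id$ exactly compensates the sign flip in $\partial_t$ under $s\mapsto\bar s$. After this step your bootstrap via Lemma 2.2 and (3.7) is correct and is precisely what the paper does.
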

\begin{proof}We start by observing that since the symmetry $F$ satisfies  $F_*|T_u\cH=-\id_{T_u\cH}$, it reverses Poisson bracket at $u$, hence in a neighborhood of $u$. 
At the price of shrinking $D$, we can assume that this neighborhood contains $f(D)$.
Because of the Riemann mapping theorem we can also assume that $I\subset\bR$ is a line segment and $D$ is in the upper half plane $\{s\in\bC\colon \Imm s>0\}$.
The lemma for $k=\infty$ will follow once we prove it for $k<\infty$, so we take $k$ finite.
Write $V=C^\infty(X)$.
By H\"older continuity, $f$ and its partials of order $\le 2$ extend continuously to $D\cup I$. Moreover, $f$ thus extended will be twice continuously differentiable on $D\cup I$, and its partials of order $\le 2$ will be in $C^{\{k\}}_{\text{loc}}(D\cup I,V)$. This can be seen, for example, for the $\partial_t$ derivative from
$$
\frac{f(s+i\tau)-f(s)}\tau=\frac 1\tau\int_0^1\frac{df(s+i\lambda\tau)}{d\lambda}d\lambda=\int_0^1\partial_t f(s+i\lambda\tau)\, d\lambda,
$$
where $s\in D\cup I$ and $\tau>0$ is small. Of course, $\partial_\sigma f=0$ along $I$. 

We can further extend $f$ to $D_-=\{s\colon \bar s\in D\}$ by $f(s)=F\big(f(\bar s)\big)$.
This defines $f$ as a $C^{1+\{k\}}_{\text{loc}}$ function on $\Omega=D\cup I\cup D_-$, that is in $C^k_{\text{loc}}(D\cup D_-,V)$, and in view of Lemma 3.1, solves the WZW equation on $D\cup D_-$. In other words, on $D\cup D_-$
\begin{equation}
f_{s\bar s}=\frac18\big(|\grad f_\sigma|^2+|\grad f_t|^2\big)+\frac i2\{f_s,f_{\bar s}\},
\end{equation}
see (3.6).
The right hand side here is in $C_{\text{loc}}^{\{k\}}(\Omega,V)$. 
By Lemma 2.2 therefore $f\in C_{\text{loc}}^{2+\{k\}}(\Omega,V)$, and by continuity $f$ solves the WZW equation on $\Omega$. 
We are done if $2<k<3$. If  $k>3$, we continue: the right hand side of (3.7) is now known to be in $C_{\text{loc}}^{1+\{k\}}(\Omega,V)$, and Lemma 2.2 implies $f\in C_{\text{loc}}^{3+\{k\}}(\Omega,V)$, and so on, until we reach $f\in C_{\text{loc}}^k(\Omega,V)$.
\end{proof}

\section{The initial value problem}

In this section we will study an initial value problem for the WZW equation, in two versions. 
In one, the initial Cauchy data are given on the boundary of a domain $D\subset\bC$, in the other, in the interior. 
Although initial value problems for these equations are ill--posed, we prove that boundary Cauchy data that admit a solution form a large set; and that, loosely speaking, for Cauchy data associated with $u\in\cH$ it makes no difference whether they are imposed in the interior or on the boundary, provided there is a symmetry about $u$.

If $M$ is a finite dimensional smooth manifold and $V$ is a locally convex vector space, there is a locally convex topology on the space $C^\infty(M,V)$ of smooth maps. One takes $k\in\bN$, a compact $K\subset M$ contained in a coordinate neighborhood, and one of the seminorms $p$ defining the  topology of $V$. Using the coordinates around $K$ one then lets
$$
p_{k,K}(f)=\max\{p\big(\partial^\alpha f(x)\big)\colon |\alpha|\le k,\, x\in K\},\qquad f\in C^\infty(M,V);
$$
these seminorms $p_{k,K}$ induce the topology in question. 
When $V=C^\infty(X)$, with  $X$ a compact manifold, the same topology is obtained if we identify $C^\infty(M,V)$ with $C^\infty(M\times X)$ by associating with 
$f\in C^\infty(M,V)$ the function $w\in C^\infty(M\times X)$ given by $w(s,\cdot)=f(s)$, and endow  $C^\infty(M\times X)$ with its usual Fr\'echet space structure.

Given $u\in\cH$ and a smooth open  arc $I\subset \bC$, we consider smoothly bounded Jordan domains $D\subset\bC$ such that $I\subset\bar D$, and $f\in C^\infty(\bar D,\cH)$ that solve the WZW equation
\begin{equation}\begin{aligned}
\Ree\nabla_s\partial_{\bar s} f&=\frac i2\{\partial_s f,\partial_{\bar s}f\}\qquad\text {on } D,\\
 f(s)& = u \phantom{\{\partial_s,\partial_{\bar s}\}}\qquad\text{for all } s\in I.
\end{aligned}\end{equation}
Fix a unit normal vector field $\nu$ along  $I$. Consider furthermore  those smooth maps $\xi\colon I\to T_u\cH\approx C^\infty(X)$ for which there are a $D$ and $f$ as above with $\xi=\partial f/\partial\nu$. 
Denote the set of such $\xi$ by $\cE_u=\cE_u(I)$ and denote by $E_u=E_u(I)\subset\cE_u$ the set of those $\xi$ that can be obtained from a $D$ that itself, and not only its closure, contains $I$.
Lemma 3.2 then gives
\begin{lem}If $I$ is analytic and there is a symmetry $F\colon\cU\to\cH$ about $u$, then $E_u=\cE_u$.
\end{lem}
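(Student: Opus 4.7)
The inclusion $E_u\subset\cE_u$ is immediate from the definitions: any $D$ containing $I$ in its interior also satisfies $I\subset\bar D$. So the real content is $\cE_u\subset E_u$, and this is where I would use the reflection principle provided by Lemma 3.2.

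Given $\xi\in\cE_u$, pick a smoothly bounded Jordan domain $D$ with $I\subset\bar D$ and $f\in C^\infty(\bar D,\cH)$ solving (4.1) and satisfying $\partial f/\partial\nu=\xi$ on $I$. The goal is to produce a different smoothly bounded Jordan domain $D'$, together with a smooth solution $f'$ on $\bar{D'}$, such that $I\subset D'$ (not merely $I\subset\bar{D'}$) and $\partial f'/\partial\nu=\xi$ along $I$. If $I\subset D$ already there is nothing to prove, so assume the nontrivial case where $I$ meets $\partial D$. For each point $s_0\in I\cap\partial D$, choose a small analytic subarc $J\subset I$ of $\partial D$ containing $s_0$; since $f\in C^\infty(\bar D,\cH)$ we have $f\in C^\infty_{\text{loc}}(D,\cH)$ and $\lim_{s\to s_1}f(s)=u$ for every $s_1\in J$. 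Lemma 3.2 (with $k=\infty$) then produces a $C^\infty$ extension of $f$, still solving the WZW equation, across $J$; concretely, across $J$ the extension is the reflection $s\mapsto F(f(\bar s))$ after identifying $J$ with a real interval via a Riemann map.

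Carrying this out at every point of $I\cap\partial D$ and patching with $f$ on $D$ itself, I obtain an open set $\Omega\subset\bC$ containing $I$ and a $C^\infty$ extension $\tilde f\colon\Omega\to\cH$ of $f|_{D\cap\Omega}$ that still solves the WZW equation. The patching is well defined because on overlaps the extensions agree: each one is the unique $C^\infty$ solution of (3.7) with the prescribed boundary value $u$ on the relevant subarc of $I$, obtained by the reflection procedure. Note that because $F$ is an involution with $F(u)=u$ and $F_*|T_u\cH=-\id$, the extended $\tilde f$ equals $u$ on $I$ and its normal derivative across $I$ is the odd extension of $\partial f/\partial\nu$, so in particular $\partial\tilde f/\partial\nu=\xi$ on $I$.

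Now choose a smoothly bounded Jordan domain $D'$ with $I\subset D'\subset\bar{D'}\subset\Omega$; such a $D'$ exists because $\Omega$ is an open neighborhood of the smooth arc $I$ in $\bC$. Then $f':=\tilde f|_{\bar{D'}}$ lies in $C^\infty(\bar{D'},\cH)$, solves the WZW equation on $D'$, equals $u$ on $I$, and has $\partial f'/\partial\nu=\xi$. This exhibits $\xi\in E_u$ and completes the proof. The only step with any substance is the appeal to Lemma 3.2 together with the verification that the reflected pieces glue to a single $C^\infty$ solution on $\Omega$; the rest is just choosing a convenient intermediate Jordan domain $D'$.
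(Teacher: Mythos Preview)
Your proof is correct and follows the same route as the paper, which simply states that Lemma 3.2 yields Lemma 4.1. You supply the natural elaboration: reflect $f$ across $I$ via the symmetry $F$ as in the proof of Lemma 3.2, obtain a $C^\infty$ solution on a two-sided neighborhood $\Omega$ of $I$, and then choose a smoothly bounded Jordan domain $D'$ with $I\subset D'\subset\bar{D'}\subset\Omega$. One minor remark: your patching argument tacitly assumes that near each $s_0\in I\cap\partial D$ the arcs $I$ and $\partial D$ coincide, which is the case when $I\subset\partial D$ (the only situation actually used, cf.\ Lemma 4.2) but not for an arbitrary $D$ with $I\subset\bar D$; the paper's one-line proof glosses over this edge case as well. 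Also, the reason the local extensions patch is simply that the Schwarz reflection across an analytic arc is canonical, so the formula $s\mapsto F\big(f(R(s))\big)$ is independent of the choice of local Riemann map---this is cleaner than invoking a uniqueness statement for (3.7).
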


At the same time $\cE_u$ is large, for any $u$: 
\begin{lem}Let $D$ be a smoothly bounded Jordan domain and $I\subset\partial D$ an open arc whose complement is not just one point. 
For a dense set of $\xi\in C^\infty(I,T_u\cH)$ the problem (4.1) has a solution $f\in C^\infty(\bar D,\cH)$ such that the (outer) normal derivative $\partial f/\partial\nu=\lambda\xi$ on $I$ with some $\lambda\in(0,\infty)$. 
Hence the linear hull of $\cE_u$ is dense in $C^\infty(I,T_u\cH)$.
\end{lem}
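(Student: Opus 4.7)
The plan is to reduce the lemma to a perturbation analysis of the homogeneous Monge--Amp\`ere equation near the trivial solution $f \equiv u$, and to extract the required normal derivatives via an implicit function theorem. By the reduction of Section 3, a solution $f \in C^\infty(\bar D, \cH)$ of (4.1) corresponds through $w(s, \cdot) = f(s)$ to a function $w \in C^\infty(\bar D \times X)$ satisfying $(\omega + i\partial\bar\partial w)^{n+1} = 0$, $\omega + i\partial\bar\partial w|\{s\}\times X > 0$, and $w(s, x) = u(x)$ on $I \times X$; the normal derivative $\partial f/\partial\nu|_I$ then becomes $\partial w/\partial\nu|_{I \times X} \in C^\infty(I \times X) \cong C^\infty(I, T_u\cH)$.

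I will set up a family of Dirichlet problems parameterized by boundary data on $(\partial D \setminus I) \times X$: for small $\psi \in C^\infty(\partial D \times X)$ vanishing on $I \times X$, solve $(\omega + i\partial\bar\partial w)^{n+1} = 0$ on $\bar D \times X$ with $w = u + \psi$ on $\partial D \times X$. At $\psi = 0$ the unique solution is $w \equiv u$. Expanding $(\omega_u + i\partial\bar\partial h)^{n+1}$ and using that $\omega_u^n$ is an $(n,n)$-form in the $X$-directions alone, one sees that the linearization at $w = u$ reduces to $h_{s\bar s} = 0$: the Laplacian in the $s$-variable, with $x$ as a smooth parameter. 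This linearization is an isomorphism from $C^{2,\alpha}(\bar D \times X)$-functions with zero boundary data to $C^{0,\alpha}(\bar D \times X)$, because for each fixed $x$ the Dirichlet problem for $\Delta_s$ on $D$ is uniquely solvable. The implicit function theorem in $C^{k,\alpha}$-scales (bootstrapped to $C^\infty$ by elliptic regularity applied in the $s$-variable for each $x$) therefore produces a smooth family $\psi \mapsto w_\psi$ of smooth solutions defined for small $\psi$.

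Define $\Psi(\psi) = \partial w_\psi/\partial\nu|_{I \times X}$. Then $\Psi(0) = 0$, and the differential $d\Psi(0)\psi = \partial h/\partial\nu|_I$, where $h \in C^\infty(\bar D \times X)$ is the unique function that is $s$-harmonic for each $x$, vanishes on $I \times X$, and equals $\psi$ on $(\partial D \setminus I) \times X$. The image of $d\Psi(0)$ is dense in $C^\infty(I \times X)$ by a duality argument: if a continuous linear functional $\ell$ annihilated this image, the harmonic function $\tilde h$ on $\bar D \times X$ (for each $x$) with Dirichlet data $\ell$ on $I$ and $0$ on $\partial D \setminus I$ would, by Green's identity applied to the pair $(h,\tilde h)$, satisfy $\partial\tilde h/\partial\nu = 0$ on $\partial D \setminus I$ as well; unique continuation then forces $\tilde h \equiv 0$ and hence $\ell = 0$. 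This step essentially uses the hypothesis that $\partial D \setminus I$ is not a single point, so that $\psi$ is a genuine perturbation parameter.

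Finally, for any $\xi = d\Psi(0)\psi$ in this dense image, the Taylor expansion $\Psi(t\psi) = t\xi + O(t^2)$ rewrites, for each small $t > 0$, as $\lambda\xi_t$ with $\lambda = t > 0$ and $\xi_t = \xi + O(t) \to \xi$, so $\lambda\xi_t \in \cE_u$ with $\xi_t$ arbitrarily close to $\xi$. The set $\{\eta \in C^\infty(I, T_u\cH) : \lambda\eta \in \cE_u \text{ for some } \lambda > 0\}$ therefore meets every neighborhood of every $\xi$ in the dense image of $d\Psi(0)$, hence is itself dense, from which density of the linear hull of $\cE_u$ follows. The main obstacle I anticipate is the functional-analytic setup: the linearized operator $\partial_s\partial_{\bar s}$ on $\bar D \times X$ is not elliptic (it has no $x$-derivatives), so standard elliptic IFT machinery does not apply off the shelf. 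One must treat $x$ as a smooth parameter, verify smooth dependence of the Green's operator $\Delta_s^{-1}$ on $x$, and control corner regularity at $\partial I \times X$, where $\psi$ transitions from $0$ to generic smooth data.
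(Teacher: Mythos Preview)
Your overall strategy---linearize the Monge--Amp\`ere Dirichlet problem at the constant solution $w\equiv u$, identify the linearized equation as $h_{s\bar s}=0$, and then argue that the resulting normal derivatives on $I$ are dense---matches the paper's exactly. But the step where you produce the nonlinear solutions $w_\psi$ has a genuine gap, and it is precisely the obstacle you flag at the end.

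The linearized operator $h\mapsto h_{s\bar s}$ is \emph{not} an isomorphism from $\{h\in C^{2,\alpha}(\bar D\times X):h|_{\partial D\times X}=0\}$ to $C^{0,\alpha}(\bar D\times X)$: solving $\Delta_s h=g$ for each fixed $x$ gains two derivatives in $s$ but none in $x$, so the inverse does not land back in $C^{2,\alpha}(\bar D\times X)$. Your proposed fix, ``treat $x$ as a smooth parameter,'' cannot work for the \emph{nonlinear} problem, because $(\omega+i\partial\bar\partial w)^{n+1}$ involves the mixed and pure $x$--derivatives $w_{s\bar x_j}$, $w_{x_j\bar x_k}$; the equation is not a family of scalar equations indexed by $x$. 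This is exactly why the paper does not attempt a direct implicit function theorem. Instead it invokes Donaldson's theorem (packaged as Lemma~4.3): solvability of the Dirichlet problem for boundary data near $\varphi^0\equiv u$, with smooth dependence on the data. Donaldson's proof goes through an entirely different mechanism---a foliation of $\bar D\times X$ by graphs of holomorphic discs $\rho_x:\bar D\to X$ satisfying a totally real boundary condition---and the implicit function theorem is applied to that disc problem, which \emph{is} elliptic. You should simply cite Lemma~4.3 here rather than try to reinvent it.

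For the density of the linearized normal derivatives, the paper takes a more direct route than your duality argument: after conformally normalizing so that $I\subset\bR$, it writes down explicit $s$--harmonic functions $\dot w=\operatorname{Im}\sum_{j=1}^m u_j(x)s^j$ vanishing on $I\times X$, whose normal derivatives $\partial\dot w/\partial\nu|_I=\sum_j j u_j(x)s^{j-1}$ are visibly dense in $C^\infty(I\times X)$. Your duality argument can be made to work, but as written it is loose (you use a distribution $\ell$ as Dirichlet data), and the explicit construction is both shorter and avoids any functional--analytic subtleties.
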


The lemma is a consequence of \cite[Theorem 1]{D2}, or rather of its proof. Donaldson works with the Monge--Amp\`ere equation, which is, as we have seen, equivalent to the WZW equation. He considers the Dirichlet problem
\begin{equation}\begin{aligned}
(\omega+i\partial\bar\partial w)^{n+1}&=0\qquad\text{on}\quad \bar D\times X\\
\omega+i\partial\bar\partial w|\{s\}\times X&>0\qquad \text{for all } s\in\bar D \\
w|\partial D\times X &=\varphi
\end{aligned}\end{equation}
and proves that the  boundary conditions $\varphi$ for which (4.2) has a solution $w\in C^\infty(\bar D\times X)$ form an open subset of $C^\infty(\partial D\times X)$.
(He deals with $D$ the unit disc, from which the general case follows by the Riemann mapping theorem.)
\cite{D2} does not give all the details of the proof, but the idea is the following. Fix a point $o\in D$. Any solution $w$ of (4.2) can be reconstructed from a foliation $\cF$ of $\bar D\times X$. 
The leaves are graphs of smooth maps $\rho_x\colon \bar D\to X$, parametrized by $x\in X$, that are holomorphic on $D$ and satisfy $\rho_x(o)=x$ as well as a certain boundary condition determined by $\varphi$.
The map $x\mapsto\rho_x\in C^\infty(\bar D, X)$ is also smooth. 
That these $\rho_x$ can be found is proved by the implicit function theorem, assuming of course that they can be found for some $\varphi^0$ and $\varphi$ is close to $\varphi^0$.
We will choose $\varphi^0(s,\cdot)\equiv u$, when the Monge--Amp\`ere solution is $w^0(s,\cdot)\equiv u$ and $\rho_x^0\equiv x$.
However, before turning to the implicit function theorem, we switch from  the Fr\'echet manifold of smooth maps $\bar D\to X$  to Banach manifolds.
 Donaldson uses manifolds modelled on Sobolev spaces, but  it is equally  convenient to work with   the Banach manifold $A^k$ of H\"older continuous maps $\rho\in C^k(\bar D,X)$ that are holomorphic on $D$. Here $k>0$ is not an integer.
The implicit function theorem provides a neighborhood $\cV_k\subset C^\infty(\partial D\times X)$ of $\varphi^0$ and a $C^\infty$ map
$$
P\colon X\times\cV_k\to A^k
$$
such that the functions $\rho_x=P(x,\varphi)$ satisfy $\rho_x(o)=x$ and the requisite boundary condition determined by $\varphi$.

In fact, $P$ maps into $A^\infty=A^k\cap C^\infty(\bar D, X)$. 
This is so because the boundary condition for $\rho_x$ requires that a certain lift $g_x$ of $\rho_x$ have boundary values on a smooth maximally real submanifold of a complex manifold $W$. 
This implies that $g_x$ and so $\rho_x$ are smooth on $\bar D$, see \cite[Lemme 2, p. 436]{L1}. 
Moreover, $P$ is $C^\infty$ even when viewed as a map with values in the Fr\'echet manifold $A^\infty$. 
For let  $(x^1,\varphi^1)\in X\times\cV_k$. Donaldson shows that $P(x^1,\varphi^1)$ is `superregular'; the implicit function theorem therefore produces for each $l\in(0,\infty)\setminus \bN$ a neighborhood $\cW_l\subset X\times C^\infty(\partial D\times X)$ of $(x^1,\varphi^1)$ and a $C^\infty$ map $Q\in C^\infty(\cW_l, A^l)$
such that the functions $\theta_x=Q(x,\varphi)$ also satisfy $\theta_x(o)=x$ and the boundary condition. 
There is a uniqueness property that implies that $P=Q$ on $\cW_l\cap(X\times\cV_k)$. This shows that $P$, viewed as a map into $A^l$ is smooth in a neighborhood of $(x^1,\varphi^1)$. 
As $(x^1,\varphi^1)$ was arbitrary, $P \colon X\times\cV_k\to A^l$ is smooth, and as $l$ was arbitrary, $P$ is indeed smooth as a map into $A^\infty$.
From $P(\cdot,\varphi)$ one can then reconstruct the solution $w$ of (4.2) simply by extending $\varphi$ harmonically along the graphs of $\rho_x=P(x,\varphi)$, $x\in X$. 
The upshot of all this is the following.

\begin{lem}There are a neighborhood $\cV\subset C^\infty(\partial D\times X)$ of $\varphi^0(s,\cdot)\equiv u$ and a smooth map $\Theta\colon \cV\to  C^\infty(\bar D\times X)$  such that $\Theta(\varphi^0)(s,\cdot)=u$ for all $s\in\bar D$, and $\Theta(\varphi)=w$ for $\varphi\in\cV$ satisfies (4.2).
\end{lem}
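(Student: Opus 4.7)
The strategy is to assemble the pieces already developed in the paragraphs preceding the lemma. The superregularity argument yielded a smooth map $P\colon X\times\cV_k\to A^\infty$ (viewed into the Fr\'echet manifold), where $\cV_k\subset C^\infty(\partial D\times X)$ is a neighborhood of $\varphi^0$, such that the maps $\rho_x=P(x,\varphi)$ satisfy Donaldson's boundary condition and $\rho_x(o)=x$; at $\varphi^0$ we have $\rho_x^0\equiv x$. I would first shrink $\cV_k$ to a $\cV$ on which the evaluation map $\Psi_\varphi\colon\bar D\times X\to\bar D\times X$, $(s,x)\mapsto(s,\rho_x(s))$, is a diffeomorphism. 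Openness of this condition in the Fr\'echet topology on $\varphi$ follows from compactness of $\bar D\times X$, smoothness of $P$, and the fact that $\Psi_{\varphi^0}=\id$, so any sufficiently $C^1$-small perturbation remains a diffeomorphism.

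Next, I would construct $\Theta$ in leafwise coordinates and then pull back. For each $x\in X$ and $\varphi\in\cV$, let $h_x(\varphi)\in C^\infty(\bar D)$ denote the harmonic extension to $D$ of the boundary function $s\mapsto\varphi\bigl(s,\rho_x(s)\bigr)$ on $\partial D$. Put $W(\varphi)(s,x)=h_x(\varphi)(s)$, a function in $C^\infty(\bar D\times X)$, and finally define $\Theta(\varphi)=W(\varphi)\circ\Psi_\varphi^{-1}$.

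Smoothness of $\Theta\colon\cV\to C^\infty(\bar D\times X)$ unfolds in three steps. First, $(x,\varphi)\mapsto\varphi(\cdot,\rho_x(\cdot))|\partial D$ is smooth into $C^\infty(\partial D)$ because $P$ is smooth into $A^\infty$ and composition with a smooth function on $\partial D\times X$ is smooth. Second, Poisson integration $C^\infty(\partial D)\to C^\infty(\bar D)$ is continuous and linear, so $(x,\varphi)\mapsto h_x(\varphi)$ is smooth and hence $W\colon\cV\to C^\infty(\bar D\times X)$ is smooth. Third, diffeomorphism inversion and composition are smooth operations on the relevant Fr\'echet manifolds. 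The identity $\Theta(\varphi^0)(s,y)=u(y)$ is immediate: $\rho_x^0\equiv x$ makes $h_x(\varphi^0)$ the harmonic extension of the constant $u(x)$, namely $u(x)$ itself, while $\Psi_{\varphi^0}^{-1}(s,y)=(s,y)$. That $\Theta(\varphi)$ satisfies (4.2) is precisely Donaldson's content recalled just above the lemma: harmonic extension along the leaves $\{(s,\rho_x(s))\}$ of the foliation recovers the Monge--Amp\`ere solution with boundary values $\varphi$, and positivity of $\omega+i\partial\bar\partial w$ on each fiber $\{s\}\times X$ persists near $\varphi^0$ by continuity.

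The main obstacle is a bookkeeping one: tracking Fr\'echet rather than Banach topologies throughout. The text has already carried out the decisive step, upgrading $P$ from $A^k$-valued to $A^\infty$-valued via the uniqueness of the parametrix $Q$ furnished by the implicit function theorem at each regularity $l$. Once $P$ is smooth in the Fr\'echet sense, all remaining operations---boundary restriction, Poisson extension, composition with smooth maps of the compact manifold with boundary $\bar D\times X$, and diffeomorphism inversion---preserve smoothness tautologically, so no further analytic work is needed.
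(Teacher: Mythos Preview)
Your proposal is correct and is essentially the paper's own argument: the lemma is stated as ``the upshot'' of the preceding discussion, whose last sentence says the solution is reconstructed ``simply by extending $\varphi$ harmonically along the graphs of $\rho_x=P(x,\varphi)$,'' and you have merely written out explicitly the diffeomorphism $\Psi_\varphi$, the Poisson extension $h_x$, and the chain of Fr\'echet-smoothness checks that this sentence compresses.
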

\begin{proof}[Proof of Lemma 4.2]We argue in the language of the Monge--Amp\`ere equation.
With a small $\lambda\in\bR$ and a function $\psi\in C^\infty(\partial D\times X)$ that vanishes on $I\times X$  consider the solution $w=w_\lambda$ of (4.2) corresponding to $\varphi(s,x)=u(x)+\lambda\psi(s,x)$.
 For example, $w_0(s,\cdot)=u$ for all $s$. 
By Lemma 4.3 $w_\lambda\in  C^\infty(\bar D\times X)$ depends smoothly on $\lambda$. 
Write $\pi$ for the projection $\bar D\times X\to X$.
Taking $\partial/\partial\lambda$ of (4.2) gives that $\dot w=\partial w/\partial\lambda|_{\lambda=0}\in C^\infty(\partial D\times X)$ solves
\begin{align}
\partial\bar\partial \dot w\wedge(\pi^*\omega_u)^n&=0\qquad\text{on }\bar D\times X \\
\dot w&=\psi\qquad\text{on }\partial D\times X.
\end{align}
The meaning of (4.3) is that $\dot w$ is harmonic along the leaves $D\times \{x\}$, $x\in X$.

Translating back to the WZW language, (4.2) implies that $\xi_\lambda\colon I\to T_u\cH$ given by $\xi_\lambda(s)=(\partial w_\lambda/\partial\nu)(s,\cdot)$ is in $\cE_u$. 
Let $\xi=\lim_{\lambda\to 0}\xi_\lambda/\lambda$, so that 
$\xi(s)=(\partial\dot w/\partial\nu)(s,\cdot)$  for $s\in I$. To finish the proof it will suffice to show that such $\xi$, for all choices of $\psi$, form a dense subset of $C^\infty (I,T_u\cH)$, or equivalently, that solutions $\dot w$ of (4.3), (4.4) produce  $\partial\dot w/\partial\nu$ that are dense in $C^\infty(I\times X)$. 
It will be convenient at this point to normalize $D$ so that $I\subset\bR$. Let 
$$
u_1,\ldots,u_m\in C^\infty(X)\qquad\text{and} \qquad q(s,x)=\sum_1^m u_j(x) s^j.
$$
Clearly, $\dot w=\Imm q$ solves (4.3), (4.4) with $\psi=\Imm q|(\partial D\times X)$. 
For this solution $\partial \dot w/\partial\nu=\partial q/\partial s$ along $I\times X$, and such functions indeed form a dense subset of $C^\infty(I\times X)$.
\end{proof}

\section{The proof of Theorem 1.2}

Analyticity properties of a function are encoded in its analytic wave front. 
Any continuous (or hyper--) function $v$ defined on a finite dimensional real analytic manifold $Y$ has an analytic wave front set
$$
\wf(v)\subset T^*Y\setminus\text{zero section}.
$$
Rather than a precise definition---which can be found in \cite[Chapter VII and IX]{H}, \cite[Chapter 6]{Sj}, or \cite[Definition 2.3]{L3}---we will need three facts about wave front sets.
First, $v$ is analytic if and only if $\wf(v)=\emptyset$, see \cite[Theorem 8.4.5]{H} or \cite[Th\'eor\`eme 6.3]{Sj}.
Second, if $p\colon Y\times\bR\to Y$ is the projection, then $\wf(p^*v)=p^*\wf(v)$. This can be read off from Sj\"ostrand's definition, or from the less general \cite[Definition 2.3]{L3}. 
Last,
\begin{lem}Let $X$ be a complex manifold,  $x\in X$, $s\in\bR$, and $U\subset \bC\times X$ a neighborhood of $(s,x)$. 
Suppose $\cF$ is a smooth foliation of $U$ by Riemann surfaces, and the leaf $L$ through $(s,x)$ is transverse to  $H=\bR\times X$. 
If $v\in C(U\cap H)$ can be extended to a function $\tilde v\in C(U)$ that is harmonic along the leaves of $\cF$, and $b\in\wf(v)\cap T^*_{(s,x)}H$, then $b|T_{(s,x)}(L\cap H)=0$.
\end{lem}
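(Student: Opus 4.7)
The plan is to use harmonicity of $\tilde v$ along the leaves of $\cF$ to produce a holomorphic extension of $v$ in a direction tangent to $L\cap H$, and then to conclude that the analytic wave front set of $v$ at $(s,x)$ is conormal to $L\cap H$ via an FBI contour deformation.

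First I would pass to local coordinates adapted to $L$: since $L$ is a holomorphic disc through $(s,x)$ transverse to $H$, it is locally a graph $\{(z,W_L(z))\}$ for some holomorphic $W_L$, and the biholomorphic (in particular real analytic) change $(z,w)\mapsto(z,w-W_L(z))$ brings us to coordinates in which $(s,x)\leftrightarrow(0,0)$, $H=\{\Imm z=0\}$, $L=\{w=0\}$. Then $L\cap H=\{\Imm z=0,\,w=0\}$ and $T_{(0,0)}(L\cap H)$ is spanned by $\partial/\partial s$ (with $s=\Ree z$), so the lemma reduces to showing that the $ds$-component of every $b\in\wf(v)\cap T^*_{(0,0)}H$ vanishes. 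Nearby leaves of $\cF$ become graphs $L_y=\{(z,W(z,y)):z\in D\}$ with $W$ holomorphic in $z$, smooth in the transversal parameter $y$, and $W(z,0)=0$.

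Since $\tilde v$ is harmonic on each leaf $L_y$, the function $u(z,y):=\tilde v(z,W(z,y))$ is continuous in $(z,y)$ and harmonic in $z$ for each $y$; the Schwarz integral formula on a fixed small disc around $z=0$ gives $u(z,y)=2\Ree H(z,y)$ with $H(\cdot,y)$ holomorphic and $H$ jointly continuous in $(z,y)$. Setting $V(z,y):=H(z,y)+\overline{H(\bar z,y)}$, one obtains a function $V$ that is holomorphic in $z$, continuous in $(z,y)$, and satisfies $V(s,y)=v(s,W(s,y))$ for real $s$. Thus the pullback of $v$ by the smooth diffeomorphism $\mu(s,y)=(s,W(s,y))$ admits a holomorphic extension in the first variable.

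Finally, an FBI contour deformation gives the wave front conclusion. Evaluating the FBI transform in the $(s,y)$ coordinates yields an integrand holomorphic in $s$ (through $V$), so for any covector direction with $|\xi_s|\gtrsim c|\xi|$ one can shift the $s$-contour into $\{\Imm s\neq 0\}$ (with sign opposite that of $\xi_s$); the linear gain $-\xi_s\epsilon$ in the real part of the exponent dominates the quadratic loss $|\xi|\epsilon^2/2$ for the optimal choice $\epsilon\approx\xi_s/|\xi|$, producing exponential decay, while the cutoff errors are absorbed by the Gaussian factor. Since $W(z,0)=0$ forces $\partial_s W(0,0)=0$, the Jacobian of $\mu$ at the origin is block-diagonal, so the $ds$-component of cotangent vectors is preserved there; the conclusion transfers to give $\wf(v)\cap T^*_{(s,x)}H\subset\{\xi_s=0\}$, which is $b|T_{(s,x)}(L\cap H)=0$. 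The main obstacle is the FBI deformation itself: the transversal parameter $y$ is only smooth (since $\cF$ is merely smoothly foliated), so one must complexify only the $s$-variable using the holomorphy of $V$ in $z$, and verify that the analytic wave front behavior at the single point $(s,x)$ transfers faithfully across the smooth diffeomorphism $\mu$.
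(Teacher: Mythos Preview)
The paper does not give its own proof of this lemma; it simply records that the statement is a weakened variant of \cite[Lemma 2.5]{L3} (stated there for $X\subset\bC^n$, which suffices since wave fronts are local). So there is no in-paper argument to compare against; your sketch is a plausible reconstruction of what such a proof should look like, and the reduction via the biholomorphic straightening of $L$, the Schwarz--integral construction of $H$, and the definition $V(z,y)=H(z,y)+\overline{H(\bar z,y)}$ are all correct and standard.

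The one genuine gap is in the last step. As written you seem to compute the FBI transform of $v\circ\mu$ in the $(s,y)$ variables, obtain exponential decay in the $\xi_s$-direction, and then ``transfer'' this to $\wf(v)$ through the smooth diffeomorphism $\mu$. That does not work: $\wf$ is not functorial under merely smooth changes of variables, and the block-diagonality of $D\mu(0,0)$ does not help. What does work is to stay with the FBI transform of $v$ itself, perform the substitution $x=W(s,y)$ \emph{inside} that integral, and then deform in $s$. The point you are underusing is that not only $V(z,y)$ but also $W(z,y)$ is holomorphic in $z$ (the leaves are holomorphic graphs), and hence so are $\partial W/\partial y$, the Jacobian, and the substituted Gaussian phase $i\xi_s z + i\xi_x\!\cdot\!W(z,y) - \tfrac{|\xi|}{2}\bigl(z^2 + W(z,y)\cdot\overline{W(\bar z,y)}\bigr)$, once one extends $\overline{W(s,y)}$ by $\overline{W(\bar z,y)}$. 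With this observation the entire integrand is holomorphic in $z$, the contour shift $s\mapsto s\mp i\varepsilon$ is legitimate, and you are estimating $\wf(v)$ directly---no transfer across $\mu$ is needed. Once you make this explicit, the argument closes.
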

This is a weakened variant of \cite[Lemma 2.5]{L3}. That lemma applies when $X$ is an open subset of $\bC^n$ but, since wave fronts are locally determined, the difference is irrelevant.

We return to our K\"ahler manifold $(X,\omega_0)$. Given a $u\in\cH$, we denote by $\sgru_u$  symplectic gradient on $X$ with respect to the symplectic form $\omega_u$.
That is, $\omega_u$ determines an isomorphism $T^*X\to TX$, and for $\eta\in C^\infty(X)$ the vector field $\sgru_u\eta$ is the image of $d\eta$ under this isomorphism. 
We will denote the pairing between $T^*X$ and $TX$ by $\langle\, ,\,\rangle$.

\begin{lem}Assume $\omega_0$ analytic. Let $I\subset\bR$ be an open interval, $D\subset\bC$ its neighborhood, and $f\in C^\infty(D,\cH)$ a solution of the initial value problem (4.1) for the WZW equation. Let furthermore $\xi=\partial f/\partial\nu|I$. If $x\in X$ and $a\in\wf(u)\cap T_x^*X$, then $\langle a,\sgru_u\xi(s,x)\rangle=0$ for every $s\in I$.
\end{lem}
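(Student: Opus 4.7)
The plan is to translate the statement to the Monge--Amp\`ere side and apply Lemma 5.1 directly. Associated with $f$ is $w\in C^\infty(D\times X)$ defined by $w(s,\cdot)=f(s)$, which by Section 3 solves $(\omega+i\partial\bar\partial w)^{n+1}=0$ with $\omega+i\partial\bar\partial w|\{s\}\times X>0$. Since $f=u$ on $I$, we have $w(s,x)=u(x)$ on $I\times X$, i.e.\ $w|H=u\circ\pi_X$ near $I\times X$, where $H=\bR\times X$ and $\pi_X$ is projection. By the projection formula for analytic wave fronts recalled before Lemma 5.1,
$$
\wf(w|H)\cap T^*_{(s_0,x_0)}H=\{(0,a)\colon a\in\wf(u)\cap T^*_{x_0}X\}
$$
for each $(s_0,x_0)\in I\times X$.

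The positivity condition implies that $\omega+i\partial\bar\partial w$ has constant complex corank one on $D\times X$, so its kernel defines a smooth complex line field that integrates (this is a standard feature of the homogeneous Monge--Amp\`ere equation) to a smooth holomorphic foliation $\cF$ by Riemann surfaces. The leaf $L$ through $(s_0,x_0)$ is the graph of a holomorphic $\rho\colon\{|s-s_0|<\var\}\to X$ with $\rho(s_0)=x_0$, hence transverse to $H$. Moreover the calculation (3.5)/(3.6), carried out along $L$, shows $\partial_s\partial_{\bar s}(w\circ\gamma_L)=w_{s\bar s}-\sum|w_{s\bar x_j}|^2=0$ at $(s_0,x_0)$, so $w$ is harmonic along leaves of $\cF$. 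Lemma 5.1 now applies with $v=w|H$ and $\tilde v=w$, yielding for every $a\in\wf(u)\cap T^*_{x_0}X$ that $(0,a)\,\big|\, T_{(s_0,x_0)}(L\cap H)=0$.

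It remains to identify $T_{(s_0,x_0)}(L\cap H)$ explicitly. Normalizing the leaf tangent to $V=\partial/\partial s+Y$ with $Y\in T^{1,0}_{x_0}X$, the condition $\iota_V(\omega+i\partial\bar\partial w)=0$, restricted to vectors tangent to $X$, reads
$$
\iota_Y\omega_u=-i\,\overline\partial w_s(s_0,\cdot)\quad\text{as $(0,1)$-forms on $X$.}
$$
Along $I$ we have $\partial_\sigma f=0$, so $w_s(s_0,\cdot)=-i\xi(s_0,\cdot)/2$ and the right hand side becomes $-\tfrac12\overline\partial\xi(s_0,\cdot)$. Since the $(1,0)$-part of $\sgru_u\xi$ is characterized by $\iota_{(\sgru_u\xi)^{1,0}}\omega_u=\overline\partial\xi$, we conclude $Y=-\tfrac12(\sgru_u\xi)^{1,0}(s_0,x_0)$, so the real tangent $Y+\bar Y=-\tfrac12\sgru_u\xi(s_0,x_0)$. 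A short bookkeeping with the complex structure then gives $T_{(s_0,x_0)}(L\cap H)=\bR\cdot\big(\partial/\partial\sigma-\tfrac12\sgru_u\xi(s_0,x_0)\big)$. Pairing the covector $(0,a)$ with this tangent yields $-\tfrac12\langle a,\sgru_u\xi(s_0,x_0)\rangle=0$, which is the claim.

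The only genuinely delicate step is the last one, where one must translate the abstract kernel direction of $\omega+i\partial\bar\partial w$ at a point of $I\times X$ into the explicit vector $\sgru_u\xi$; once the normalization conventions ($\xi=\partial f/\partial\nu$, signs of $\partial_s$ versus $\partial_\tau$, and the definition of symplectic gradient) are fixed, the rest is formal and the real analyticity of $\omega_0$ enters only through the existence of the wave front machinery underlying Lemma 5.1.
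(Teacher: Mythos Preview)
There is a genuine gap: $w$ is \emph{not} harmonic along the leaves of $\cF$, so Lemma 5.1 does not apply to the pair $v=w|H$, $\tilde v=w$. The leaves are the characteristic curves of the closed $(1,1)$--form $\omega+i\partial\bar\partial w$, so what vanishes when restricted to a leaf $L$ is $\omega+i\partial\bar\partial w$, not $i\partial\bar\partial w$. If $L$ is the graph of a holomorphic $\rho$, then $i\partial\bar\partial(w|_L)=-\omega|_L=-\rho^*\omega_0$, which is nonzero as soon as $\rho'\neq 0$. Your appeal to (3.5)/(3.6) conflates two different quantities: the expression $w_{s\bar s}-\sum|w_{s\bar x_j}|^2$ is the Schur complement of the matrix $(v_{x_j\bar x_k})$ with $v=p+w$, not the leafwise Laplacian of $w$; computing $\partial_s\partial_{\bar s}(w\circ\gamma_L)$ honestly picks up an extra term $\sum w_{x_j\bar x_k}\rho'_j\overline{\rho'_k}$, and $w_{x_j\bar x_k}$ is \emph{not} $\delta_{jk}$ in the normalized coordinates (only $p_{x_j\bar x_k}+w_{x_j\bar x_k}$ is).

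The paper's fix is exactly to pass to a local analytic potential $p$ for $\omega_0$ and set $\tilde v(s,\cdot)=p+w(s,\cdot)$; then $(\partial\bar\partial\tilde v)^{n+1}=0$ and $\tilde v$ \emph{is} harmonic along the Bedford--Kalka leaves, so Lemma 5.1 applies to $\tilde v$. The restriction $v=\tilde v|I\times X$ equals $p+u$ pulled back from $X$, and because $p$ is real analytic, $\wf(p+u)=\wf(u)$; hence the projection formula still identifies $\wf(v)$ with the pullback of $\wf(u)$. This is precisely where the hypothesis that $\omega_0$ is analytic enters---not merely ``through the wave front machinery,'' as you suggest, but to guarantee an analytic $p$ so that shifting from $w$ to $p+w$ does not disturb the wave front set. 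Your computation of $T_{(s_0,x_0)}(L\cap H)$ is fine (it agrees, up to normalization, with the paper's $4\,\Ree\,\partial/\partial s-\sgru_u\xi$); only the harmonicity step needs repair.
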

\begin{proof}The statement is a slight extension of \cite[Theorem 2.4]{L3}. 
As there, we let $w(s,\cdot)=f(s)$ and rewrite the WZW equation as $(\omega+i\partial\bar\partial w)^{n+1}=0$.
 Near $x\in X$ we can find an analytic potential $p$ for $\omega_0$; then $\tilde v(s,\cdot)=p+w(s,\cdot)$ satisfies $(\partial\bar\partial \tilde v)^{n+1}=0$. 
According to Bedford and Kalka \cite{BK} there is a foliation $\cF$ as described in Lemma 5.1. Let $L$ be the leaf through $(s,x)$. 
In the proof of \cite[Theorem 2.4]{L3} we computed that $T_{(s,x)}(L\cap H)$ is spanned by
$$
4\Ree\frac\partial {\partial s}-\sgru_u\xi(s,x)\in T_s\bR\oplus T_xX\approx T_{(s,x)}(\bR\times X).
$$
Let $v=\tilde v|I\times X$, so $v(s,\cdot)=u$ for all $s\in I$. 
The transformation property of wave fronts under a projection implies that the projection $I\times X\to X$ pulls back $a\in\wf(u)$ to a $b\in\wf(v)\cap T^*_{(s,x)}(I\times X)$. 
Therefore the conclusion of Lemma 5.1, $b|T_{(s,x)}(L\cap H)=0$, implies $\langle a,\sgru_u\xi(s,x)\rangle=0$.
\end{proof}
\begin{proof}[Proof of Theorem 1.2]According to \cite[Proposition 2.1]{L3} $\cH$ contains a $v$ for which $\omega_v$ is analytic. 
Therefore we lose no generality by  assuming that $\omega_0$ itself is analytic. In this case  we want to show that if there is a symmetry about $u$, then $u$ is analytic, i.e., $\wf(u)=\emptyset$. 
Take an $x\in X$ and $a\in T_x^*X\setminus\{0\}$, and fix an interval $I\subset\bR$. Since $E_u(I)=\cE_u(I)$, and the linear hull of the latter is dense (see Lemmas 4.1, 4.2), for any $s\in I$ there is a $\xi\in E_u$ such that
$$
\langle a,\sgru_u\xi(s,x)\rangle\neq 0.
$$
Now $\xi\in E_u$ means that on some neighborhood $D\subset\bC$ of $I$  the initial value problem (4.1) for the WZW equation has a solution satisfying $\xi=\partial f/\partial\nu|I$. 
Lemma 5.2 therefore implies $a\notin\wf(u)$. Since $a$ was arbitrary, this means $u$ is analytic, as claimed.
\end{proof}

\end{document}